\newtheorem{theorem}{Theorem}
\newtheorem{lemma}[theorem]{Lemma}
\newtheorem{proposition}[theorem]{Proposition}
\newtheorem{rem}{Remark}
\DeclareMathOperator*{\argmax}{arg\,max}
\title{Network Independent Rates in Distributed Learning}
\author{Angelia Nedi\'{c}, Alex Olshevsky and C\'{e}sar A. Uribe
\thanks{The authors are with the Coordinated Science Laboratory, University of Illinois, 1308 West Main Street, Urbana, IL 61801, USA, \{angelia,aolshev2,cauribe2\}@illinois.edu.
	This research is supported partially by the National Science Foundation under
	grants no.\ CCF 11-11342 and no.\ CMMI-1463262 and by the Office of Naval Research under grant
	no.\ N00014-12-1-0998.}  
}
\begin{document}
\maketitle
\begin{abstract}
We propose a new belief update rule for \textit{Distributed Non-Bayesian learning} in \textit{time-varying directed} graphs, where a group of agents tries to collectively identify a hypothesis that best describes a sequence of observed data. We show that the proposed update rule, inspired by the Push-Sum algorithm, is consistent; moreover we provide an explicit characterization of its convergence rate. Our main result states that, after a transient time, all agents will concentrate their beliefs at a network independent rate. Network independent rates were not available for other consensus based distributed learning algorithms. 
\end{abstract}

\section{Introduction}

 Numerous engineered and natural systems can be modeled as a group of agents (people, robots, sensors, etc.) interacting with each other as well as with the environment where they are located. The rules agents follow as well as the structure of the environment determines their ability to make decisions in a distributed manner. In this paper, we study the distributed non-Bayesian learning model where a group of agents tries to ``learn" a hypothesis (from a parametrized family) that best explains some observed data \cite{jad12,sha13,sha14,lal14b,ned14,ned15,sha15,rah15}. Observations are realizations of a vector random variable with unknown distribution. Moreover, learning should be done in a distributed manner where each agent only access an specific entry of the realizations vector without the involvement of any centralized coordination. They are also allowed to interact with other agents on a network modeled as a \textit{time-varying directed} graph. 
 
Non-Bayesian learning has been previously studied in the context of social learning with boundedly rational agents \cite{tah09,gol10,ace08,jad12}. In contrast with fully rational agents \cite{ace11,gal03,mos14,mos15}, boundedly rational agents fail to aggregate information in a fully Bayesian manner \cite{gol10}. Agents repeatedly communicate with others and use naive approaches (e.g. weighted averages) to aggregate information. Initial results proposed distributed non-Bayesian ways to aggregate beliefs following DeGroot model for opinion aggregation \cite{deg74,jad12}. Similar approaches has been used in distributed estimation \cite{rab04,rah10}, where belief propagation as been shown effective in solving hypothesis testing problem in a distributed manner \cite{ala04,sal06}. Others authors used optimization methods such as distributed dual averaging approach proposed in \cite{nes09} to solve the same parameter estimation problem showing asymptotic exponential convergence rates in terms of the learning structure of the problem \cite{sha13, lal14b}. Non-asymptotic rates have been recently derived for fixed graphs \cite{sha14} and time-varying directed graphs \cite{ned14}. 

Previous results assume the existence of a ``true state" of the world, such that the unknown distribution perfectly matches one element in the parametrized family of hypothesis. The authors in \cite{ned14} extended this approach to allow the non-realizable case where the distribution of the observations need not be a member of the family of hypothesis. Nevertheless it was assumed that the optimal hypothesis was an element of the intersection of the locally optimal hypothesis from prospective of individual agents. Later in \cite{ned15}, the authors introduced the concept of \textit{conflicting hypothesis} where locally optimal hypothesis from independent social clicks or subsets of agents need not intersect. They showed that a distributed non-Bayesian learning approach will generate all the agents to ``learn"  the hypothesis that best explains the group observations (i.e. largest \textit{group confidence}) even if it was not locally optimal. 

Conflicting hypothesis can be interpreted as different social clicks having different optimal hypothesis; which could represent faulty sensors, or malicious agents trying to affect the network. In general, agents with higher connectivity will have more influence in defining the hypothesis with largest group confidence. Thus, a faulty sensor with good connectivity can severely hinder the performance of the estimation process. In this context, in \cite{gol10} the authors defined a society as ``wise" if the influence of the most influential agents vanishes with the size of the network, or similarly for graphs of fixed size, the there is some balancedness in network. In the case of static graphs, knowledge about the topology of the network can be used to overcome network imbalance, but this introduces additional requirements and limits the ad-hoc nature of a distributed solution. When the network is not static, the connectivity of the agents changes with time and thus its influence, introducing variability in the hypotheses group confidence. The authors in \cite{ned15} avoided this situation by assuming the network had some ``balanced" properties that assured network independence in the group confidence values. 

Our contribution is three fold. First, we propose a new update rule that extends the recent result of \cite{ned15} to time-varying directed graphs, where the optimal hypothesis have the highest group confidence regardless agents influence. Second, we show that this rule converges at a geometric rate that is network independent and achieves long-term balancing of the graph sequence. This improves on previous results on distributed learning rules for time-varying directed graphs, \cite{ned14}, where the learning rate depends on the network balance. Finally we develop a general framework for deriving non-asymptotic convergence rates for update rules that can be expressed as log-linear functions.

This paper is organized as follows: Section \ref{problem} describes the distributed learning problem and the proposed belief update rule; it also states the main result that describes the geometric, balanced and network independent convergence rate. Section \ref{rates} presents the detailed proof of the main result. Conclusions and future work are presented in Section \ref{conclusions}.

\textit{Notation}: Random variables are represented as upper case letters, i.e. $X$, whereas their realizations as its corresponding lower case, i.e. $x$. Time indexes are indicated by subscripts and make use of the letter $k$. Agent indexes are represented as superscripts and use the letters $i$ or $j$. Bold letters indicate vectors, where $[\boldsymbol{X}_k]_i = X_k^i$. The $i$-th row and $j$-th column entry of a matrix $A$ is denoted as $[A]_{ij}$.

\section{Problem statement and Main Result}\label{problem}

Consider a group of $n$ agents, $V = (1,2,\hdots,n)$, each observing realizations of independent processes at each time step $k=1,2,\hdots$. Agent $i$ observes realizations of a sequence of stationary independent, identically distributed random variables  $\{S_k^i\}$ with unknown distribution $f^i\left(\cdot\right)$. Staking all the random variables at time $k$ generates a single vector $\boldsymbol{S}_k$ distributed as $\boldsymbol{f}= \prod\limits_{i=1}^{n}f^i$. 

The group objective is to collectively agree on a parameter, that describes a probability distribution from a prespecified family, closest to the true distribution of the observations. Each agent has a family of parametrized distributions $(l^i(\cdot|\theta))$ with parameter $\theta \in \Theta$. The set $\Theta$ is common to all agents and it is assumed finite; it can be understood as a set of parameters that characterizes possible probability distribution for the group observations. Probability distributions over the set $\Theta$ are refereed as beliefs. 

Under this setup, the group of agents collectively tries to solve the following optimization problem
\begin{align}\label{main_problem}
\Theta^* = \argmax\limits_{\theta \in \Theta} \mathsf{C}\left(\theta\right)
\end{align}
where $\mathsf{C}\left(\theta\right)$ is called the \textit{group confidence} on the hypothesis $\theta$ and it is defined as
\begin{align*}
\mathsf{C}\left(\theta\right) & = -D_{KL}\left(\boldsymbol{f}\left(\cdot\right)\|\boldsymbol{l}\left(\cdot | \theta\right)\right)\\
& = -\sum\limits_{i=1}^n D_{KL}\left(f^i\left(\cdot\right)\|l^i\left(\cdot|\theta\right)\right).
\end{align*}

 Specifically, $D_{KL}\left(f^i\left(\cdot\right)\|l^i\left(\cdot|\theta\right)\right)$ is Kullback-Leibler divergence between the true distribution of $S_k^i$ and the probability distribution $l^i( \cdot | \theta)$ that would have been seen by agents $i$ if hypothesis $\theta$ were correct. The group confidence is the sum of the individual confidences for each of the agents. To avoid the trivial case we assume $\Theta^*$ set is a strict subset of $\Theta$. Agents do not know the distributions $f^i(\cdot)$ and they try to ``learn" the solution to this optimization problem based on local observations and interactions, see Figure~\ref{triangle}. 
\begin{figure}[h!]
	\centering
	\includegraphics[width=0.25\textwidth]{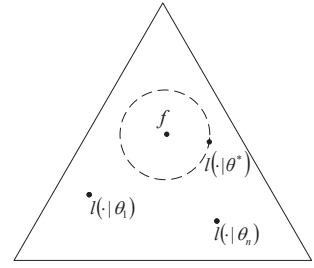}
	\caption{Geometric interpretation of the learning objective. The triangle represents the simplex composed of all agents' probability distributions. The observations of the agents are generated according to a joint probability distribution 
		$\boldsymbol{f}\left(\cdot\right)$. The joint distribution for the agent observations is parametrized by $\theta$. 
		The agent goal is to learn a hypothesis that best describes their observations, which corresponds to the distribution 
		$l(\cdot|\theta^*)$ (the closest to the distribution $\boldsymbol{f}\left(\cdot\right)$).}
	\label{triangle}
\end{figure}

The agents interact in a network modeled as a sequence of directed graphs $\{\mathcal{G}_k\}$. Each graph $\mathcal{G}_k = (V,E_k)$ is composed by a set of vertices (i.e. agents) $V$ and a set of directed links $E_k$, i.e. agent $i$ can only send messages to its out neighbors. Naturally we require some long term connectivity property for this graph sequence. We assume that the sequence  $\left\{\mathcal{G}_k\right\}$ is $B$-strongly-connected, i.e., there is an integer $B\ge 1$ such that the graph $\left(V,\bigcup_{i=kB}^{\left(k+1\right)B-1}E_i\right)$ is connected for all $k \geq 0$. Thus, we do not require every graph $\mathcal{G}_k$ to be connected instantaneously but rather over an uniform  period of time..

We propose a new belief update rule such that, as the number of observations increases, the beliefs concentrate in the set $\Theta^*$. Specifically we do so by showing that the beliefs for all $\theta \notin \Theta^*$ will go to zero. Furthermore, our main result states a non-asymptotic and geometric rate of convergence for the propose update rule. This converge rate is expressed explicitly in term of the number of agents, the network parameters as well as the group confidence for each of the hypothesis. Additionally this update rule is shown to induce a balanced behavior in the network, where independently on its connectivity, after a transient time all nodes will learn as if the sequence of graphs were balanced.

\subsection{Belief Update Rule}

We propose a new algorithm where every node update their beliefs on the hypothesis set following the next rule
\begin{subequations}\label{push_bayes}
\begin{align}
y^i_{k+1} & =\sum\limits_{j \in N_k^i }\frac{y_k^j}{d^j_k} \\
\mu_{k+1}^i\left(\theta\right) & = \frac{1}{Z_{k+1}^i}\left(\prod\limits_{j \in N_k^i }\mu_{k}^j\left(\theta\right)^{\frac{y_k^j}{d^j_k}}l^i\left(s_{k+1}^i|\theta\right)\right)^{\frac{1}{y_{k+1}^i}}
\end{align}
\end{subequations}
where at time $k$: $N_k^i$ is the set of in-neighbors of node $i$, that is $N_k^i = \{j|(j,i) \in E_k\}$ (a node is assumed to be its own neighbor) and the value $d^i_k$ its the out degree of node $i$. The term $Z_{k+1}^i$ is a normalization factor defined as,
\begin{align*}
 Z_{k+1}^i & = \sum\limits_{p=1}^{m}\left( \prod\limits_{j \in N_k^i }\mu_{k}^j\left(\theta_p\right)^{\frac{y_k^j}{d^j_k}}l^i\left(s_{k+1}^i|\theta_p\right)\right)^{\frac{1}{y_{k+1}^i}}. 
\end{align*}

The proposed update rule in Eqs. \eqref{push_bayes} is inspired by the Push-Sum protocol recently studied in \cite{kem03,ben10} and its application to distributed optimization in time-varying directed graphs \cite{tsi12,ned13,iut13,rab14,ger15}. At each time step, each node shares to its out neighbors its beliefs on the hypothesis set $\Theta$. Additionally, it also shares a self assigned weight $\frac{y_k^j}{d^j_k}$ which indicates how does it wants to be weighted by its neighbors. Node $i$ computes the geometric averages of the beliefs of its in-neighbor set. However node $i$ does not assign the self-weight node $j$ provided but a normalized version of it; on the contrary node $i$ adds all the self-weights as $y^i_{k+1} =\sum\limits_{j \in N_k^i }\frac{y_k^j}{d^j_k} $ and weights node $j$ with $\frac{y_k^j}{d^j_k}\frac{1}{y^i_{k+1}}$. At that point the term $y^i_{k+1}$ becomes the self-weight she will communicate to its out neighbors in the time step $k+1$. Once the geometric average step is done, then the update Bayesian step is performed based on the local observations with a learning rate parameter of $\frac{1}{y^i_{k+1}}$.

We require that if $f^i\left(s^i\right)>0$ there exists an $\alpha >0$ such that $l^i\left(s^i | \theta \right)  > \alpha$ for all $\theta \in \Theta$ for all agents $i=1,\ldots,n$. The corresponding constant $\alpha$ will appear in the bounds we will later derive on the convergence rate. Also, for each agent, there is a non-empty subset of the optimal hypothesis set on which the prior belief is strictly positive, i.e., there is a nonempty set $\Theta^{*i}\subseteq \Theta^*$ such that $\mu_{0}^i\left(\theta\right)>0$ for all  $\theta \in \Theta^{*i}$. Furthermore, the intersection set $\hat \Theta^*=\cap_{i=1}^n\Theta^{*i}$ is nonempty. This last assumption will be avoided by having prior beliefs with uniform distribution.

\subsection{Main Result}

Our main results provides the non-asymptotic convergence rates for the proposed update rule in Eq. \eqref{push_bayes}. This result explicitly states how the beliefs concentrate in the set of optimal hypothesis solutions to the optimization problem in Eq. \eqref{main_problem}. The proof for this theorem will be presented in Section \ref{rates}.

\begin{theorem}\label{thm_1}
	Let $\boldsymbol{f}$ be the distribution of the vector random variable $\boldsymbol{S_k}$ and suppose that:
	\begin{enumerate}[(a)]
		\item The sequence $\{\mathcal{G}_k\}$ is uniformly strongly connected.
		\item If $f^i\left(s^i\right)>0$, then there exists an $\alpha >0$ such that $l^i\left(s^i | \theta \right)  > \alpha$ for all $\theta \in \Theta$.
	\end{enumerate} 
	Also, let $\rho\in(0,1)$ be a given error percentile (or confidence value).
	Then, the update rule of Eqs.~(\ref{push_bayes}), with $y_0^i=1$ and uniform initial beliefs, has the following property:
	there is an integer $\boldsymbol{N}(\rho)$ such that, with probability $1 -\rho$, 
	for all $k\ge \boldsymbol{N}(\rho)$ and for all $\theta_v\notin\Theta^*$ 
	there holds
	$$\mu_{k}^i\left(\theta_v \right) \leq \exp\left( -\frac{k}{2} \gamma_2+ \frac{1}{\delta}\gamma_1^i\right)
	\quad\hbox{for all } i = 1, \ldots, n,$$ 
	where
	\begin{align*}
	\boldsymbol{N}(\rho)
	\triangleq \left\lceil\frac{8 \left( \log\left(\alpha \right)\right) ^2  \log\left(\frac{1}{\rho} \right) }{\delta^2\gamma_2^2} + 1 \right\rceil,
	\end{align*}
	\begin{align*}
	\gamma_1^i & = \max_{\substack{\theta_w\in \hat{\Theta}^* \\\theta_v \notin \Theta^*} }
	\left\{
	\frac{2C}{1- \lambda} \|\boldsymbol{H}\left(\theta_v,\theta_w\right)\|_1  - \left[ \boldsymbol{H}\left(\theta_v,\theta_w\right)\right]_i   \right\},\cr
	\gamma_2 &=  \frac{1}{n}\, \min_ {\theta_v\notin\Theta^*}
	\left(\mathsf{C}^*-\mathsf{C}\left(\theta_v\right)\right),
	\end{align*}
	with $\mathsf{C}\left(\theta\right)$ being the group confidence on hypothesis $\theta$ and $\mathsf{C}^* = \mathsf{C}(\theta)$ for all $\theta \in \Theta^*$ and the vector $\boldsymbol{H}\left(\theta_v,\theta_w\right)$ has coordinates given by
	\begin{align*}
	\left[ \boldsymbol{H}\left(\theta_v,\theta_w\right)\right]_i = 
	 D_{KL}(f^i(\cdot) \| l^i(\cdot|\theta_v)) - D_{KL}\left(f^i (\cdot) \|l^i\left(\cdot|\theta_w\right)\right). 
	\end{align*} 
	The constants $C$, $\delta$ and $\lambda$ satisfy the following relations:\\
	\noindent (1)
	For general $B$-strongly-connected graph sequences $\{\mathcal{G}_k\}$, 
	\begin{align*}
	C = 4, & \ \ \ \ \lambda = \left(1-\frac{1}{n^{nB}}\right)^{\frac{1}{B}}, \ \ \ \delta \geq \frac{1}{n^{nB}}. 
	\end{align*}
	\noindent (2) If every graph $G_k$ is regular with $B=1$,
	\begin{align*}
	C = \sqrt{2}, & \ \ \ \ \lambda = \left(1-\frac{1}{4n^3} \right)^{\frac{1}{B}}, \ \ \ \delta = 1.
	\end{align*} 
\end{theorem}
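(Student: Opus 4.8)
The plan is to put the update \eqref{push_bayes} on a logarithmic scale, where it becomes an affine recursion driven by a column-stochastic Push-Sum matrix, to split the resulting state into a scalar agreement part plus a geometrically vanishing disagreement part, and to control the agreement part with a Hoeffding bound. Because the initial beliefs are uniform, $\Theta^{*i}=\Theta^*$ for every $i$ and $\hat\Theta^*=\Theta^*$. Fix $\theta_v\notin\Theta^*$ and $\theta_w\in\Theta^*$, and set $z_k^i=\log\mu_k^i(\theta_v)-\log\mu_k^i(\theta_w)$ and $x_k^i=y_k^i z_k^i$. Taking logarithms in \eqref{push_bayes} and subtracting the equation for $\theta_w$ from the one for $\theta_v$ cancels the normalizer $Z_{k+1}^i$ and yields
\begin{align*}
x_{k+1}^i=\sum_{j\in N_k^i}\frac{x_k^j}{d_k^j}+[\boldsymbol{L}_{k+1}]_i,
\end{align*}
where $[\boldsymbol{L}_{k+1}]_i=\log\bigl(l^i(s_{k+1}^i\mid\theta_v)/l^i(s_{k+1}^i\mid\theta_w)\bigr)$, with $x_0^i=0$ by uniformity of the priors and $y_{k+1}^i=\sum_{j\in N_k^i}y_k^j/d_k^j$, $y_0^i=1$. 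Let $A_k$ be the column-stochastic matrix with $[A_k]_{ij}=1/d_k^j$ on the edges of $\mathcal{G}_k$ and $\Phi(k,t)=A_{k-1}\cdots A_t$ (with $\Phi(k,k)=I$). Then $x_k=\sum_{t=1}^k\Phi(k,t)\boldsymbol{L}_t$ and $y_k=\Phi(k,0)\mathbf{1}$; since $\mathbf{1}^\top A_k=\mathbf{1}^\top$, also $\mathbf{1}^\top y_k=n$ and $\mathbf{1}^\top x_k=\sum_{t=1}^k\mathbf{1}^\top\boldsymbol{L}_t$ for all $k$. As $\mu_k^i(\theta_w)\le1$, we have $\mu_k^i(\theta_v)\le\exp(z_k^i)=\exp(x_k^i/y_k^i)$, so it suffices to upper bound $x_k^i/y_k^i$.

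Next I would invoke the Push-Sum mixing estimate for $B$-strongly-connected sequences of column-stochastic matrices — the step that supplies the constants $C$, $\lambda$, $\delta$: for all $k,i$ one has $y_k^i\ge\delta>0$, and for $t<k$, $\sum_j|[\Phi(k,t)]_{ij}-y_k^i/n|\le C\lambda^{k-t}$, i.e.\ row $i$ of $\Phi(k,t)$ approaches the rank-one profile $\tfrac1n y_k^i\mathbf{1}^\top$ geometrically. Using $\mathbf{1}^\top x_k=\sum_t\mathbf{1}^\top\boldsymbol{L}_t$ one obtains the exact decomposition
\begin{align*}
\frac{x_k^i}{y_k^i}&=\frac1n\sum_{t=1}^k\mathbf{1}^\top\boldsymbol{L}_t\\
&\quad+\sum_{t=1}^{k}\sum_j\Bigl(\frac{[\Phi(k,t)]_{ij}}{y_k^i}-\frac1n\Bigr)[\boldsymbol{L}_t]_j,
\end{align*}
in which, in the second (disagreement) sum, the total weight on $\{[\boldsymbol{L}_t]_j\}_j$ is at most $C\lambda^{k-t}/\delta$ for $t<k$ and $O(1/\delta)$ for the freshest term $t=k$.

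Third, I would identify the drift. Decompose $[\boldsymbol{L}_t]_j=-[\boldsymbol{H}(\theta_v,\theta_w)]_j+W_t^j$; by the identity $\mathbb{E}_{f^j}[\log l^j(S\mid\theta_v)-\log l^j(S\mid\theta_w)]=-[\boldsymbol{H}(\theta_v,\theta_w)]_j$, the variables $W_t^j$ have mean zero and are independent across $t$ and $j$, and $\mathbb{E}[\mathbf{1}^\top\boldsymbol{L}_t]=-\mathbf{1}^\top\boldsymbol{H}(\theta_v,\theta_w)=-(\mathsf{C}^*-\mathsf{C}(\theta_v))\le-n\gamma_2$. Substituting: the deterministic part of the agreement term is $-\tfrac kn(\mathsf{C}^*-\mathsf{C}(\theta_v))\le-k\gamma_2$ — network-independent, since it does not see the graph sequence at all; the deterministic part of the disagreement sum, bounded using $y_k^i\ge\delta$, $0\le[\Phi(k,t)]_{ij}\le1$, $\sum_{t<k}\lambda^{k-t}\le1/(1-\lambda)$, the freshest-term contribution $-[\boldsymbol{H}(\theta_v,\theta_w)]_i/y_k^i$, and the inequality $\|\boldsymbol{H}(\theta_v,\theta_w)\|_1\ge\mathsf{C}^*-\mathsf{C}(\theta_v)\ge n\gamma_2$ to absorb stray terms, works out — after maximizing over $\theta_v\notin\Theta^*$, $\theta_w\in\Theta^*$, which is precisely the maximization defining $\gamma_1^i$ — to at most $\tfrac1\delta\gamma_1^i$.

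Finally, the zero-mean remainder of $x_k^i/y_k^i$ combines the noise of both terms; its dominant, slowly accumulating piece is $\tfrac1n\sum_{t=1}^k\mathbf{1}^\top W_t$, a sum of independent increments lying in intervals of length $\le2|\log\alpha|$ (Assumption~(b)), the disagreement noise being summable in $t$ and contributing only lower-order terms absorbed into the constants. Hoeffding's inequality makes the probability that this remainder exceeds $\tfrac k2\gamma_2$ fall below $\rho$ for all $k\ge\boldsymbol{N}(\rho)$ (the $\delta^{-2}$ in $\boldsymbol{N}(\rho)$ being the price of the $\delta^{-1}$ weights in the disagreement term). On the complementary event, a union bound over the finitely many pairs $(\theta_v,\theta_w)$ and over $k\ge\boldsymbol{N}(\rho)$ — both swallowed by the deliberately conservative constant in $\boldsymbol{N}(\rho)$ — yields $x_k^i/y_k^i\le-k\gamma_2+\tfrac1\delta\gamma_1^i+\tfrac k2\gamma_2=-\tfrac k2\gamma_2+\tfrac1\delta\gamma_1^i$ for all such $k$ and $i$, which is the stated bound on $\mu_k^i(\theta_v)$; case~(2) is the specialization to regular graphs with $B=1$, where every $A_k$ is doubly stochastic, so $y_k\equiv\mathbf{1}$, $\delta=1$, and the constants $C,\lambda$ are sharper. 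The step I expect to be the main obstacle is the Push-Sum mixing estimate itself — establishing $\sum_j|[\Phi(k,t)]_{ij}-y_k^i/n|\le C\lambda^{k-t}$ together with the uniform lower bound $y_k^i\ge\delta$ for time-varying column-stochastic $A_k$, with the explicit constants of (1)--(2) — and then threading it through the decomposition so that the accumulated disagreement collapses precisely into $\tfrac1\delta\gamma_1^i$ and the stochastic budget into $\boldsymbol{N}(\rho)$; matching those exact constants, and checking cleanly that the $W_t^j$ are independent with the claimed bounded range and that the union bounds do not enlarge $\boldsymbol{N}(\rho)$, is the delicate bookkeeping.
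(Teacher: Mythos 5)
Your route is in essence the paper's route: the same logarithmic change of variables $x_k^i=y_k^i(\log\mu_k^i(\theta_v)-\log\mu_k^i(\theta_w))$ giving an affine recursion driven by the column-stochastic push-sum matrices (the paper's Proposition~\ref{prop_phi}), the same reliance on the Ned\'ic--Olshevsky mixing result with constants $C,\lambda,\delta$ (Lemmas~\ref{lemma_angelia}--\ref{lemma_deltabound}), the same drift computation giving $\mathbb{E}[\varphi_{k+1}^i]\le \frac{1}{\delta}\gamma_1^i-k\gamma_2$ (Lemma~\ref{lemma_expected}), and a bounded-difference concentration bound solved for $k$ to produce $\boldsymbol{N}(\rho)$. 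The one place where your write-up deviates, and where it would not go through as stated, is the concentration step: you apply Hoeffding only to the agreement noise $\frac1n\sum_t\mathbf{1}^\top W_t$ and assert that the zero-mean disagreement noise is ``summable in $t$'' and ``absorbed into the constants.'' That random remainder is of constant order, bounded only by something like $\frac{2C\log(1/\alpha)}{\delta(1-\lambda)}$; it scales with $\log(1/\alpha)$, not with $\|\boldsymbol{H}(\theta_v,\theta_w)\|_1$, and $\|\boldsymbol{H}\|_1$ can be arbitrarily small while $\log(1/\alpha)$ is huge (hypotheses with tiny KL gaps but wildly fluctuating likelihood ratios). So it cannot be folded into $\frac1\delta\gamma_1^i$ as defined, and it cannot be charged against the slack $\frac k2\gamma_2$ uniformly for all $k\ge\boldsymbol{N}(\rho)$ with the stated $\boldsymbol{N}(\rho)$ (e.g.\ when $\log(1/\rho)$ is small, $\boldsymbol{N}(\rho)$ is small and the slack at $k=\boldsymbol{N}(\rho)$ need not dominate this term).

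The paper's fix is exactly what your plan is missing: do not separate the two noise pieces at all. Treat the entire $\varphi_{k+1}^i$ as a function of the $k+1$ independent observation vectors, show (Proposition~\ref{prop_bounded}) that changing any single observation moves it by at most $c_t^i=\frac{2}{\delta}\log\frac1\alpha$ (this uses that the weights $[A_{k:t}]_{ij}y_t^j/y_{k+1}^i$ are nonnegative and sum to one, together with $y_t^j\ge\delta$), and then apply McDiarmid once (Lemma~\ref{mc_bayes}) to the deviation from the mean bounded in Lemma~\ref{lemma_expected}. This covers agreement and disagreement fluctuations in one stroke and is precisely what yields the $\delta^{-2}(\log\alpha)^2$ in $\boldsymbol{N}(\rho)$ with no leftover additive term. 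Your drift bookkeeping, the identification $\mathbf{1}^\top\boldsymbol{H}(\theta_v,\theta_w)=\mathsf{C}^*-\mathsf{C}(\theta_v)$, and the specializations in cases (1)--(2) match the paper; also note the paper itself only proves the per-$k$ tail bound and does not carry out the union bounds over $k$, $i$ and $(\theta_v,\theta_w)$ that you gesture at, so that part of your concern is not where the argument needs repair.
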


This theorem shows that the network of agents will collectively solve the optimization problem in Eq. \eqref{main_problem}. After a transient time $\boldsymbol{N}(\rho)$, the belief on the hypothesis outside the optimal hypothesis set that maximizes the group confidence will decay exponentially fast. Moreover, this will happen at a rate that depends on explicitly characterized terms $\gamma_1^i$ and $\gamma_2$. This exponential rate is network independent and hold for all the nodes in the network. Additionally, after a transient time of $\frac{2 \gamma_1^i}{\delta \gamma_2}$ for which the beliefs are bounded by $1$ the exponential decay will occur at a rate that depends on $\gamma_2$ only, i.e. the average difference between the optimal confidence and the second best hypothesis.

This result generalizes previously proposed algorithms  \cite{ned15} when the optimal set of hypothesis is also optimal from the local perspective \cite{ned14}. Moreover, in contrast with previous literature, the convergence rate induced by parameter $\gamma_2$ does not depend on the parameter $\delta$, that is, after a transient time the convergence rate is as if the sequence of graphs were regular. Without this regularization behavior the amount an agents contributes to the group confidence was determined by its location in the network, i.e. $\delta$. Then in the case of time-varying graphs the importance of the nodes might change as well, and since we allow for disjoint node optimal hypothesis, the concentration of the beliefs would oscillate with the confidence as a weighted sum of local confidences are changing with the topology of the network.  

\begin{rem}
	If the auxiliary sequence $y_k^i$ is not used in the update rule, i.e. 
	\begin{align*}
	\mu_{k+1}^i\left(\theta\right) & = \frac{1}{Z_{k+1}^i}\prod\limits_{j \in N_k^i }\mu_{k}^j\left(\theta\right)^{\frac{1}{d^j_k}}l^i\left(s_{k+1}^i|\theta\right)
	\end{align*}
	with the corresponding normalization term $Z_{k+1}^i$, we obtain a similar result as in Theorem \ref{thm_1} with the exponential rate
	$$\mu_{k}^i\left(\theta_v \right) \leq \exp\left( -\delta\frac{k}{2} \gamma_2+ \gamma_1^i\right)
	\quad\hbox{for all } i = 1, \ldots, n,$$ 
	with the same constants $\delta$, $C$, $\gamma_2$, $\gamma_1^i$ and $\boldsymbol{N}(\rho)$. However, after the same transient time $\frac{2 \gamma_1^i}{\delta \gamma_2}$, the exponential decay occurs at a rate that depends on  $\delta\gamma_2$. Where $\delta$ might be very small for highly unbalanced graphs; note that $\delta \geq \frac{1}{n^{nB}}$. This might slow down convergence considerably. No proof of this statement is provided due to space constraints, but the result is by using Lemma \ref{lemma_angelia} in Section \ref{rates}.
\end{rem}
	
\section{Convergence Rate Analysis}\label{rates}

In this section we analyze the dynamics of the proposed learning rule Eqs. \eqref{push_bayes}. First lets define the flowing quantities that simplifies the analysis procedure: for all $i=1,\hdots,n$ and $k \geq 0$ let
\begin{align}
\varphi_k^i\left(\theta_v,\theta_w\right) & \triangleq \log \frac{\mu_{k}^i\left(\theta_v\right)}{\mu_{k}^i\left(\theta_w\right)} \label{phi_def}\\
\hat{\varphi}_k^i\left(\theta_v,\theta_w\right) & \triangleq  y_{k}^i \varphi_k^i\left(\theta_v,\theta_w\right) \label{phi_def2}
\end{align}
for any $\theta_v \notin \hat{\Theta}^*$ and $\theta_w \in \hat{\Theta}^*$. With this definitions in place we can focus on analyzing the dynamics of $\hat{\varphi}_k^i\left(\theta_v,\theta_w\right)$. 

\begin{proposition}\label{prop_phi}
	The quantity $\hat{\varphi}_k^i\left(\theta_v,\theta_w\right)$ evolve as
	{\small
	\begin{align}\label{aux_phi}
	\hat{\varphi}_{k+1}^i\left(\theta_v,\theta_w\right) & = \sum\limits_{i=1}^{n} \left[A_k\right]_{ij} \hat{\varphi}_k^j\left(\theta_v,\theta_w\right) + \log \frac{l^i\left(s_{k+1}^i|\theta_v\right)}{l^i\left(s_{k+1}^i|\theta_w\right)}.
	\end{align}}
Moreover, by staking all $\hat{\varphi}_k^i\left(\theta_v,\theta_w\right)$ into a single vector, $\boldsymbol{\hat{\varphi}}_{k+1}\left(\theta_v,\theta_w\right)$ can be compactly stated as 
	\begin{align}\label{vector_phi}
	\boldsymbol{\hat{\varphi}}_{k+1}\left(\theta_v,\theta_w\right) & = A_k\boldsymbol{\hat{\varphi}}_{k}\left(\theta_v,\theta_w\right) + \mathcal{L}_{k+1}^{\theta_v,\theta_w}.
	\end{align}	
	where $A_k$ is a matrix such that
	\begin{align*}
	\left[A_k\right]_{ij} & = \begin{cases}
	\frac{1}{d_k^j} & \text{if } (j,i) \in E_k\\
	0 & \text{otherwise}
	\end{cases}
	\end{align*}
	and 
	\begin{align*}
	\left[\mathcal{L}_{k+1}^{\theta_v,\theta_w}\right]_i & = \log \frac{l^i\left(s_{k+1}^i|\theta_v\right)}{l^i\left(s_{k+1}^i|\theta_w\right)}
	\end{align*}
\end{proposition}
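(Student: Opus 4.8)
The plan is a direct computation starting from the ratio of beliefs, exploiting the fact that the normalization constant $Z_{k+1}^i$ does not depend on $\theta$ and hence cancels. First I would use the belief update \eqref{push_bayes} to write
\[
\frac{\mu_{k+1}^i(\theta_v)}{\mu_{k+1}^i(\theta_w)}
= \left(\prod_{j\in N_k^i}\left(\frac{\mu_k^j(\theta_v)}{\mu_k^j(\theta_w)}\right)^{\frac{y_k^j}{d_k^j}}\cdot\frac{l^i(s_{k+1}^i|\theta_v)}{l^i(s_{k+1}^i|\theta_w)}\right)^{\frac{1}{y_{k+1}^i}},
\]
so that $Z_{k+1}^i$ drops out. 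Taking logarithms and using the definition \eqref{phi_def} of $\varphi_k^i$ then gives
\[
\varphi_{k+1}^i(\theta_v,\theta_w)
= \frac{1}{y_{k+1}^i}\left(\sum_{j\in N_k^i}\frac{y_k^j}{d_k^j}\,\varphi_k^j(\theta_v,\theta_w)
+ \log\frac{l^i(s_{k+1}^i|\theta_v)}{l^i(s_{k+1}^i|\theta_w)}\right).
\]

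Next I would multiply both sides by $y_{k+1}^i$. The left-hand side becomes $\hat\varphi_{k+1}^i(\theta_v,\theta_w)$ by \eqref{phi_def2}, while on the right-hand side the factor $y_{k+1}^i$ cancels. The key algebraic observation is that $\frac{y_k^j}{d_k^j}\varphi_k^j(\theta_v,\theta_w)=\frac{1}{d_k^j}\,\hat\varphi_k^j(\theta_v,\theta_w)$, again by \eqref{phi_def2}; this is precisely why the reweighting by the auxiliary quantity $y_{k+1}^i$ converts the nonlinear recursion (nonlinear because of the exponent $1/y_{k+1}^i$) into an affine one. Recognizing $\frac{1}{d_k^j}$ as $[A_k]_{ij}$ for $j\in N_k^i$ and $0$ otherwise, the sum over $N_k^i$ extends to a sum over all $j=1,\dots,n$, which yields \eqref{aux_phi}. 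Stacking the coordinates $\hat\varphi_k^i$ into the vector $\boldsymbol{\hat\varphi}_k(\theta_v,\theta_w)$ and collecting the log-likelihood-ratio terms into $\mathcal{L}_{k+1}^{\theta_v,\theta_w}$ then gives the matrix form \eqref{vector_phi}.

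I do not anticipate a genuine obstacle: the statement is an identity obtained by elementary manipulation of \eqref{push_bayes}. The only points requiring care are verifying that $Z_{k+1}^i$ really cancels in the ratio and keeping careful track of which node index each weight $y_k^j/d_k^j$ attaches to, so that the substitution $\frac{y_k^j}{d_k^j}\varphi_k^j=\frac{1}{d_k^j}\hat\varphi_k^j$ is applied to the correct $j$. It is worth remarking that $A_k$ defined this way is column-stochastic (each column $j$ has exactly $d_k^j$ nonzero entries, all equal to $1/d_k^j$), which is the structural property that the subsequent convergence-rate analysis—and in particular the Push-Sum--type debiasing by $y_k^i$—will exploit.
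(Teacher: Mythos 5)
Your proposal is correct and follows essentially the same route as the paper's proof: a direct computation from the update rule \eqref{push_bayes} in which the normalization $Z_{k+1}^i$ cancels in the ratio, the exponent $1/y_{k+1}^i$ is absorbed by multiplying through by $y_{k+1}^i$, and the identification $[A_k]_{ij}y_k^j\varphi_k^j=[A_k]_{ij}\hat\varphi_k^j$ yields the affine recursion and its stacked form \eqref{vector_phi}. Nothing further is needed.
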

\begin{proof}
	By the definitions provided in the learning rule Eqs. \eqref{push_bayes} and Eqs. \eqref{phi_def} and \eqref{phi_def2} we have that
	\begin{align*}
		& \hat{\varphi}_{k+1}^i\left(\theta_v,\theta_w\right)  =  y_{k+1}^i \varphi_{k+1}^i\left(\theta_v,\theta_w\right) \\
		& \qquad =  y_{k+1}^i \log \frac{\mu_{k+1}^i\left(\theta_v\right)}{\mu_{k+1}^i\left(\theta_w\right)} \\
		&\qquad  = y_{k+1}^i \log \frac{\left( \prod\limits_{i=1}^{n}\mu_{k}^j\left(\theta_v\right)^{\left[A_k\right]_{ij}{y_k^j}}l^i\left(s_{k+1}^i|\theta_v\right)\right)^{\frac{1}{y_{k+1}^i}}}{\left( \prod\limits_{i=1}^{n}\mu_{k}^j\left(\theta_w\right)^{\left[A_k\right]_{ij}{y_k^j}}l^i\left(s_{k+1}^i|\theta_w\right)\right)^{\frac{1}{y_{k+1}^i}}} \\
		&\qquad  = \log \frac{\left( \prod\limits_{i=1}^{n}\mu_{k}^j\left(\theta_v\right)^{\left[A_k\right]_{ij}{y_k^j}}l^i\left(s_{k+1}^i|\theta_v\right)\right)}{\left( \prod\limits_{i=1}^{n}\mu_{k}^j\left(\theta_w\right)^{\left[A_k\right]_{ij}{y_k^j}}l^i\left(s_{k+1}^i|\theta_w\right)\right)} \\
		&\qquad  = \sum\limits_{i=1}^{n} \left[A_k\right]_{ij} y_k^j \log \frac{\mu_{k}^j\left(\theta_v\right)}{\mu_{k}^j\left(\theta_w\right)}+ \log \frac{l^i\left(s_{k+1}^i|\theta_v\right)}{l^i\left(s_{k+1}^i|\theta_w\right)} \\
		&\qquad  = \sum\limits_{i=1}^{n} \left[A_k\right]_{ij} \hat{\varphi}_k^j\left(\theta_v,\theta_w\right) + \log \frac{l^i\left(s_{k+1}^i|\theta_v\right)}{l^i\left(s_{k+1}^i|\theta_w\right)}.
	\end{align*}
	
	The first three equalities follow from  Eq. \eqref{push_bayes}, \eqref{phi_def} and \eqref{phi_def2}. Cancellation of the term $y_{k+1}^i$ leads to the fourth equality. The rest of the proof follows from arithmetic properties of logarithms.
\end{proof}

With this result in hand we are able to further analyze the sequence $\hat{\varphi}_{k+1}^i\left(\theta_v,\theta_w\right)$. First by adding and subtracting the term $\sum\limits_{t=1}^{k} \phi_k \mathbf{1}' \mathcal{L}_{t}^{\theta_v,\theta_w}$ from Eq. \eqref{vector_phi} we obtain
\begin{align*}
\boldsymbol{\hat{\varphi}}_{k+1}\left(\theta_v,\theta_w\right) & = A_{k:0}\boldsymbol{\hat{\varphi}}_{0}\left(\theta_v,\theta_w\right) + \sum\limits_{t=1}^{k}A_{k:t}  \mathcal{L}_{t}^{\theta_v,\theta_w} + \mathcal{L}_{k+1}^{\theta_v,\theta_w} \\
& \qquad  - \sum\limits_{t=1}^{k} \phi_k \mathbf{1}' \mathcal{L}_{t}^{\theta_v,\theta_w} +\sum\limits_{t=1}^{k} \phi_k \mathbf{1}' \mathcal{L}_{t}^{\theta_v,\theta_w} \\
& =  A_{k:0}\boldsymbol{\hat{\varphi}}_{0}\left(\theta_v,\theta_w\right) + \sum\limits_{t=1}^{k} D_{k:t}   \mathcal{L}_{t}^{\theta_v,\theta_w} +  \mathcal{L}_{k+1}^{\theta_v,\theta_w}\\
& \qquad  +\sum\limits_{t=1}^{k} \phi_k \mathbf{1}' \mathcal{L}_{t}^{\theta_v,\theta_w}
\end{align*}
with $D_{k:t}  = A_{k:t} - \phi_k \mathbf{1}'$.

From now on we will ignore the first term in $\boldsymbol{\hat{\varphi}}_{k+1}\left(\theta_v,\theta_w\right)$, assuming all agents use a uniform distribution as their initial beliefs, thus $\hat{\varphi}^i_{0}\left(\theta_v,\theta_w\right) =0$. This simplifies the notation and facilitates the exposition of the results, moreover, it does not limit the generality of our method since this term can be upper bounded and it will depend at most linearly with the number of agents.

Now by going back from $\boldsymbol{\hat{\varphi}}_k\left(\theta_v,\theta_w\right)$ to $\boldsymbol{\varphi}_k\left(\theta_v,\theta_w\right)$ we have that
\begin{align*}
&\varphi_{k+1}^i\left(\theta_v,\theta_w\right) = \\ 
& \qquad  \frac{\sum\limits_{t=1}^{k} \left[ D_{k:t}  \mathcal{L}_{t}^{\theta_v,\theta_w} \right]_i +  \left[ \mathcal{L}_{k+1}^{\theta_v,\theta_w}\right]_i +\sum\limits_{t=1}^{k} \phi_k^i \mathbf{1}' \mathcal{L}_{t}^{\theta_v,\theta_w}}{y_{k+1}^i}
\end{align*}
Similarly the dynamics of $\boldsymbol{y}_k$ can be expressed as
\begin{align*}
\boldsymbol{y}_{k+1} & = A_{k:0}\boldsymbol{y}_0 \\
& = A_{k:0}y_0 - \phi_k \mathbf{1}' \boldsymbol{y}_0 + \phi_k \mathbf{1}'\boldsymbol{y}_0 \\
& = D_{k:0}\mathbf{1}+ \phi_k n
\end{align*}
which leads us to
\begin{align}\label{phi_iter}
& \varphi_{k+1}^i\left(\theta_v,\theta_w\right) =  \nonumber \\
& \qquad \frac{ \sum\limits_{t=1}^{k} \left[ D_{k:t}  \mathcal{L}_{t}^{\theta_v,\theta_w} \right]_i +  \left[ \mathcal{L}_{k+1}^{\theta_v,\theta_w}\right]_i +\sum\limits_{t=1}^{k} \phi_k^i \mathbf{1}' \mathcal{L}_{t}^{\theta_v,\theta_w}}{\left[ D_{k:0}\mathbf{1}\right] _i+ \phi_k^i n}
\end{align}

The next lemma will provide a general tool for analyzing the non-asymptotic properties of a learning rule that can be expressed as a log-linear function of bounded variations and upper bounded expectation as it was recently used in \cite{ned14,ned15}. It can be interpreted as a specialized version of the McDiarmid concentration \cite{mcd89} for log-linear update rules.
\begin{lemma}\label{mc_bayes}
	Consider a learning update rule that can be expressed as a log-linear function, i.e.,
	\begin{align*}
	\mu_{k+1}^i\left(\theta_v\right) & \leq \exp \left( \varphi^i_{k+1}\left(\theta_v,\theta_w\right)\right). 
	\end{align*}
	If the term  $ \varphi^i_{k+1}\left(\theta\right)$ is of bounded variations with bounds $\{c^i_k\}$ at each time $k$ and its expected value is upper bounded by an affine function as ${\mathbb{E}\left[\varphi^i_{k+1}\left(\theta\right)\right] \leq \frac{1}{\delta}\gamma_1^i - k \gamma_2}$
	Then,
	{\small
	\begin{align*}
	\mathbb{P}\left(\mu_{k+1}^i\left(\theta_v \right) 
	\geq \exp\left( -\frac{k}{2}\gamma_2 + \frac{1}{\delta}\gamma_1^i \right) \right)  & \leq   \exp\left( -\frac{\frac{1}{2} \left( k\gamma_2 \right)^2}{\sum_{t=1}^{k+1}\left(  c_t^i\right)  ^2} \right)
	\end{align*}
	}
\end{lemma}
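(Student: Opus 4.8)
The plan is to reduce the claim to a single one-sided application of McDiarmid's bounded differences inequality \cite{mcd89}. First I would strip off the exponential: since by hypothesis $\mu_{k+1}^i(\theta_v) \le \exp(\varphi_{k+1}^i(\theta_v,\theta_w))$ and $t\mapsto\exp(t)$ is increasing,
\[
\mathbb{P}\left(\mu_{k+1}^i(\theta_v) \ge \exp\left(-\frac{k}{2}\gamma_2 + \frac{1}{\delta}\gamma_1^i\right)\right)
\le \mathbb{P}\left(\varphi_{k+1}^i(\theta_v,\theta_w) \ge -\frac{k}{2}\gamma_2 + \frac{1}{\delta}\gamma_1^i\right),
\]
so it suffices to control the upper tail of $\varphi_{k+1}^i(\theta_v,\theta_w)$.

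Second, I would center $\varphi_{k+1}^i$ about its mean and use the assumed affine bound on the expectation. Writing $\bar\varphi := \mathbb{E}\left[\varphi_{k+1}^i(\theta_v,\theta_w)\right]$, the hypothesis $\bar\varphi \le \frac{1}{\delta}\gamma_1^i - k\gamma_2$ implies
\[
-\frac{k}{2}\gamma_2 + \frac{1}{\delta}\gamma_1^i - \bar\varphi
\;\ge\; -\frac{k}{2}\gamma_2 + \frac{1}{\delta}\gamma_1^i - \left(\frac{1}{\delta}\gamma_1^i - k\gamma_2\right)
\;=\; \frac{k}{2}\gamma_2,
\]
and hence, by monotonicity of the tail probability,
\[
\mathbb{P}\left(\varphi_{k+1}^i \ge -\frac{k}{2}\gamma_2 + \frac{1}{\delta}\gamma_1^i\right)
\;\le\; \mathbb{P}\left(\varphi_{k+1}^i - \bar\varphi \ge \frac{k}{2}\gamma_2\right).
\]

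Third, I would invoke McDiarmid's inequality for $\varphi_{k+1}^i(\theta_v,\theta_w)$, viewed as a deterministic function of the independent observations $\{s_t^j : 1\le t\le k+1,\ 1\le j\le n\}$ that is, by assumption, of bounded variations with bounds $\{c_t^i\}_{t=1}^{k+1}$ in the time coordinate. The one-sided bounded differences estimate then gives, for every $\epsilon>0$,
\[
\mathbb{P}\left(\varphi_{k+1}^i - \bar\varphi \ge \epsilon\right)
\le \exp\left(-\frac{2\epsilon^2}{\sum_{t=1}^{k+1}(c_t^i)^2}\right);
\]
taking $\epsilon = \frac{k}{2}\gamma_2$ produces exactly $\exp\left(-\frac{\frac12(k\gamma_2)^2}{\sum_{t=1}^{k+1}(c_t^i)^2}\right)$, and chaining this with the two displays above yields the lemma.

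I expect the only delicate point to be the correct reading of the bounded-variations hypothesis, not the probabilistic argument, which is textbook once that hypothesis is granted. Concretely, one has to confirm that $\varphi_{k+1}^i$ is a measurable function of the finitely many independent observations up to time $k+1$ and that resampling all observations at a single time $t$ perturbs $\varphi_{k+1}^i$ by at most $c_t^i$, uniformly over the remaining data; via the closed form in Eq.~\eqref{phi_iter} this amounts to bounding the entries of $D_{k:t}$ and of $\phi_k^i\mathbf{1}'$ relative to the denominator $[D_{k:0}\mathbf{1}]_i + \phi_k^i n$, combined with the uniform bound on the log-likelihood ratios supplied by Assumption (b) (the constant $\alpha$). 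That computation is precisely what will later pin down the explicit $c_t^i$, and hence the constants $C$, $\lambda$ and $\delta$ appearing in Theorem~\ref{thm_1}.
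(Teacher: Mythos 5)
Your proposal is correct and follows essentially the same route as the paper's own proof: bound the event via monotonicity of the exponential and the log-linear hypothesis, center $\varphi_{k+1}^i$ at its mean using the affine bound on the expectation to reduce the threshold to $\tfrac{k}{2}\gamma_2$, and then apply the one-sided McDiarmid bounded-differences inequality, which with $\epsilon=\tfrac{k}{2}\gamma_2$ gives exactly $\exp\left(-\tfrac{\frac{1}{2}(k\gamma_2)^2}{\sum_{t=1}^{k+1}(c_t^i)^2}\right)$. The verification of the bounded-variations hypothesis that you flag as the delicate point is indeed handled outside this lemma (in the paper it is Proposition~\ref{prop_bounded}), so treating it as an assumption here is consistent with the paper.
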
 
\begin{proof}
	Following simple set properties of the probability measure on the desired set $\mu_{k+1}^i\left(\theta_v \right) 
	\geq \exp\left( -\frac{k}{2}\gamma_2 + \frac{1}{\delta}\gamma_1^i \right) $ we have that,
	\begin{align*}
	& \mathbb{P}\left(\mu_{k+1}^i\left(\theta_v \right) 
	\geq \exp\left( -\frac{k}{2}\gamma_2 + \frac{1}{\delta}\gamma_1^i \right) \right) \\
	& \qquad \leq  \mathbb{P}\left(\exp\left( \varphi_{k+1}^i\left(\theta_v,\theta_w\right) \right)   
	\geq \exp\left( -\frac{k}{2}\gamma_2 + \frac{1}{\delta}\gamma_1^i  \right) \right) \\
	& \qquad =  \mathbb{P}\left(\varphi_{k+1}^i\left(\theta_v,\theta_w\right) \geq   -\frac{k}{2}\gamma_2 + \frac{1}{\delta}\gamma_1^i \right)  \\
	& \qquad  =  \mathbb{P}\left(\varphi_{k+1}^i\left(\theta_v,\theta_w\right) - \mathbb{E}\left[\varphi_{k+1}^i\left(\theta_v,\theta_w\right) \right]  \geq  \right. \\
	& \qquad \qquad \qquad \left.  -\frac{k}{2}\gamma_2 + \frac{1}{\delta}\gamma_1^i  - \mathbb{E}\left[\varphi_{k+1}^i\left(\theta_v,\theta_w\right) \right] \right) \\
	& \qquad  = \mathbb{P}\left(\varphi_{k+1}^i\left(\theta_v,\theta_w\right) - \mathbb{E}\left[\varphi_{k+1}^i\left(\theta_v,\theta_w\right) \right]  \geq   \frac{k}{2}\gamma_2  \right).
	\end{align*}
	
	Use McDiarmid's inequality to get the desired result.
\end{proof}

Before proceeding with the analysis of the learning rule we will recall a result from \cite{ned13} about the geometric rates of convergence of product of column stochastic matrices.

\begin{lemma}\label{lemma_angelia}
	[Corollary 2.a in \cite{ned13}] Let the graph sequence $\{\mathcal{G}_k\}$, with $\mathcal{G}_k = \left(E_k,V\right)$ be uniformly strongly connected. Then, there is a sequence $\{\phi_k\}$ of stochastic vectors such that,
	\begin{align*}
	|\left[A_{k:t}\right]_{ij} - \phi_k^i| & \leq C \lambda^{k-t}  \ \ \ \ \ \ \text{for all } \ k \leq t \leq 0
	\end{align*}
	for $C$ and $\lambda \in \left(0,1\right)$ as described in Theorem \ref{thm_1}.
\end{lemma}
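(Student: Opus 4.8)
The plan is to prove the geometric ergodicity of the backward products $A_{k:t}$ by passing to the transpose and invoking the classical theory of weak ergodicity for forward products of row-stochastic matrices. Each $A_k$ is column-stochastic: column $j$ sums to $\sum_{i:(j,i)\in E_k} 1/d_k^j = 1$ because node $j$ has exactly $d_k^j$ out-neighbors. Hence the transpose $A_k'$ is row-stochastic, and $A_{k:t}' = A_t' A_{t+1}'\cdots A_k'$ is a \emph{forward} product of row-stochastic matrices indexed from the earliest time $t$ to the latest time $k$. The asserted bound $|[A_{k:t}]_{ij}-\phi_k^i|\le C\lambda^{k-t}$ is then exactly the statement that, as the window $k-t$ grows, the rows of this forward product (indexed by $j$) become identical and converge at a geometric rate to a common row vector $\phi_k'$ that depends only on the latest index $k$.

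The second step is to construct the limiting sequence $\{\phi_k\}$, the \emph{absolute probability sequence}. Following Kolmogorov's construction, $\phi_k$ is obtained as a limit point of the columns of $A_{k:t}$ as $t$ decreases, using compactness of the simplex of stochastic vectors; the contraction estimate derived below then guarantees that this limit is unique and independent of the column index $j$. Column-stochasticity of $A_{k:t}$, i.e.\ $\mathbf{1}'A_{k:t}=\mathbf{1}'$, forces $\mathbf{1}'\phi_k=1$, and nonnegativity is inherited entrywise, so each $\phi_k$ is a stochastic vector as required. Note that it is precisely the column-stochastic (rather than doubly-stochastic) setting that makes $\phi_k$ a nontrivial vector rather than the uniform one, which is why this construction is needed.

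The quantitative core is a scrambling-and-contraction argument. Using $B$-strong connectivity together with the self-loop assumption, I would first show that over any window of length $nB$ the product $A_{(s+1)nB-1 : snB}$ is entrywise strictly positive, with every entry bounded below by $\eta\ge n^{-nB}$: each admissible edge carries weight $1/d_k^j\ge 1/n$, and within $nB$ consecutive steps the union-connectivity guarantees a positive-weight directed path between every ordered pair of nodes, so a product of at most $nB$ such weights lower-bounds each entry. I would then introduce the Dobrushin ergodicity coefficient $\tau(\cdot)$ of the row-stochastic factors $A_s'$; it is submultiplicative, and any row-stochastic matrix all of whose entries exceed $\eta$ satisfies $\tau\le 1-\eta$ (up to the usual factor). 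Applying submultiplicativity across the $\lfloor (k-t)/(nB)\rfloor$ disjoint windows contained in $[t,k]$ yields geometric decay of $\max_{i,j}|[A_{k:t}]_{ij}-\phi_k^i|$, and rewriting the per-window factor as a per-step rate produces the constants $\lambda=(1-n^{-nB})^{1/B}$, $C=4$, and $\delta\ge n^{-nB}$ claimed in part (1) of Theorem \ref{thm_1}.

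For the regular case with $B=1$ I would instead exploit that the factors are much closer to symmetric, so a spectral argument in the Euclidean norm applies: bounding the second-largest singular value of the window product by a spectral-gap estimate of order $1-1/(4n^3)$, and converting the resulting $2$-norm deviation into the entrywise bound through a norm inequality, gives the sharper constants $C=\sqrt 2$ and $\lambda=1-1/(4n^3)$ with $\delta=1$. The main obstacle throughout is the scrambling step, namely extracting \emph{uniform} positivity of a bounded-length window product from merely \emph{eventual} ($B$-interval) connectivity of a time-varying directed graph, and then tracking $\eta$ and the window length carefully enough that the submultiplicative ergodicity bound reproduces exactly the stated $\lambda$, $C$, and $\delta$; since the statement is Corollary 2.a of \cite{ned13}, the detailed constant bookkeeping can be imported from that reference.
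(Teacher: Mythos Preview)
The paper does not prove this lemma at all: it is stated verbatim as Corollary~2.a of \cite{ned13} and used as a black box in the subsequent analysis (Lemma~\ref{lemma_expected} and Proposition~\ref{prop_bounded}). There is therefore no proof in the paper to compare your proposal against.

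That said, your sketch is the standard route and is essentially what the cited reference does: pass to the row-stochastic transposes, build the absolute probability sequence $\{\phi_k\}$ by compactness, and obtain geometric decay via a Dobrushin/scrambling contraction over connectivity windows. One point of bookkeeping to watch: if your scrambling window has length $nB$ and contributes a factor $1-\eta$ with $\eta\ge n^{-nB}$, the naive per-step rate would be $(1-\eta)^{1/(nB)}$, not $(1-n^{-nB})^{1/B}$ as stated in Theorem~\ref{thm_1}. Reproducing the exact exponent $1/B$ requires the sharper argument in \cite{ned13} (which extracts the contraction over a $B$-window rather than an $nB$-window by a more careful positivity estimate), so your closing remark that the precise constants should be imported from that reference is well placed. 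Apart from this constant-tracking subtlety, your plan is correct and matches the intended argument.
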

\begin{lemma} \label{lemma_deltabound}
	[Corollary 2.b in \cite{ned13}] Let the graph sequence $\left\{\mathcal{G}_k\right\}$ satisfy the B-strong connectivity assumption. Define
	\begin{align}\label{eq:defdelta}
	\delta \triangleq\inf_{k\geq 0} \left(\min_{1\leq i\leq n}\left[A_{k:0} \mathbf{1}_n \right]_i\right).
	\end{align}
	Then, $\delta \geq \frac{1}{n^{nB}}$, and if all $\mathcal{G}_k$ with $B=1$ are regular, then $\delta=1$.
	Furthermore, the sequence
	$\phi_k$ from Lemma \ref{lemma_angelia} satisfies $\phi_k^j \geq \delta/n$ for all $t \geq 0, j = 1, \ldots, n$.
\end{lemma}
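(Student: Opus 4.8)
The plan is to reduce all three claims to elementary structural facts about the column-stochastic matrices $A_k$ together with a single connectivity-to-positivity estimate. First I would record the structure of $A_k$: each column sums to one (node $j$ splits its weight $1/d_k^j$ among its $d_k^j$ out-neighbors), the diagonal is strictly positive with $[A_k]_{jj}=1/d_k^j\ge 1/n$ because of the self-loops, and every nonzero entry is $\ge 1/n$ since $d_k^j\le n$. Consequently every finite product $A_{k:0}$ is column-stochastic, so the row-sum vector $r_k\triangleq A_{k:0}\mathbf{1}_n$ obeys $\mathbf{1}_n' r_k=n$ for all $k$; in particular $\max_i[r_k]_i\ge 1$ and $\min_i[r_k]_i\le 1$, whence $\delta\le 1$. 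The recursion $r_k=A_k r_{k-1}$ with $r_{-1}=\mathbf{1}_n$ is the single object I track.

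For the bound $\delta\ge 1/n^{nB}$ I would split on $k$. For small indices $0\le k\le nB-1$ the positive diagonal alone suffices: $[r_k]_i\ge[A_k]_{ii}[r_{k-1}]_i\ge\tfrac1n[r_{k-1}]_i$, so inductively $[r_k]_i\ge (1/n)^{k+1}\ge (1/n)^{nB}$. For $k\ge nB-1$ I would invoke $B$-strong connectivity through the key positivity estimate: over any window of $nB$ consecutive steps the product $A_{k:k-nB+1}$ is entrywise positive with every entry at least $(1/n)^{nB}$. Granting this, I write $r_k=A_{k:t}\,r_{t-1}$ with $t=k-nB+1\ge 0$ and bound $[r_k]_i\ge [A_{k:t}]_{ij^\star}[r_{t-1}]_{j^\star}\ge (1/n)^{nB}\max_j[r_{t-1}]_j\ge (1/n)^{nB}$, where $j^\star=\argmax_j[r_{t-1}]_j$ and $\max_j[r_{t-1}]_j\ge 1$. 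Taking the infimum over $k$ and the minimum over $i$ gives $\delta\ge 1/n^{nB}$. The regular case with $B=1$ is then immediate: if each $\mathcal{G}_k$ is regular, then in-degree equals out-degree equals a common value at every node, so each row sum of $A_k$ equals one, $A_k$ is doubly stochastic, $A_k\mathbf{1}_n=\mathbf{1}_n$, and hence $r_k=\mathbf{1}_n$ for all $k$, giving $\delta=1$.

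For the last claim I would observe that $u_k\triangleq\tfrac1n r_k$ is a sequence of stochastic vectors evolving by $u_k=A_k u_{k-1}$, and that Lemma \ref{lemma_angelia} forces $A_{k:0}\to\phi_k\mathbf{1}_n'$, so $u_k^i=\tfrac1n\sum_j[A_{k:0}]_{ij}\to\phi_k^i$; combined with $[r_k]_i\ge\delta$ this yields $\phi_k^i\ge\delta/n$, made exact for every $k$ by the standard characterization of $\phi_k$ in \cite{ned13} as the limit of products extended arbitrarily far into the past, which removes the $C\lambda^{k-t}$ error.

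The main obstacle is the positivity estimate used for large $k$. Proving that an $nB$-window product is entrywise at least $(1/n)^{nB}$ requires converting the hypothesis that each $B$-fold graph union is strongly connected into the existence of an explicit time-respecting path between every ordered pair of nodes. The clean way to do this is to track, for a fixed source $j$, the reachable set $R=\{i:[A_{k:t}]_{ij}>0\}$: the positive self-loops make $R$ monotone in $k$, and strong connectivity of each $B$-window guarantees an edge from $R$ to its complement active somewhere in the window, so $|R|$ grows by at least one every $B$ steps until $R=V$; thus $R=V$ after at most $(n-1)B\le nB$ steps. Along any realized path the product picks up one factor $\ge 1/n$ per step (path edges at their active times, self-loops elsewhere), giving the entrywise bound $(1/n)^{nB}$. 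Getting the bookkeeping of waiting at a node via its self-loop until the next path edge becomes active exactly right across the $B$-windows is the only genuinely delicate point; everything else is arithmetic.
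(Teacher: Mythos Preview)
The paper does not prove this lemma at all: it is quoted verbatim as Corollary~2(b) of \cite{ned13} and used as a black box, so there is no ``paper's own proof'' to compare against. Your proposal therefore goes well beyond what the paper does, supplying a self-contained argument where the paper simply cites an external result.

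That said, the argument you sketch is the standard one and is essentially correct. A couple of points worth tightening. First, the $B$-strong-connectivity hypothesis in this paper is stated only for the \emph{aligned} windows $[kB,(k+1)B-1]$, not for arbitrary $B$-windows; your reachability argument still goes through because any length-$nB$ window contains at least $n-1$ aligned $B$-windows, which is exactly what you need to grow the reachable set from a singleton to all of $V$, but you should say this explicitly rather than asserting ``strong connectivity of each $B$-window'' as if it held for shifted windows. Second, your treatment of $\phi_k^j\ge\delta/n$ is, as you note, the sketchiest part: the relation $u_k^i\to\phi_k^i$ is not a limit in $k$ (both sides depend on $k$), and what you really need is the recursion $\phi_{k+1}=A_{k+1}\phi_k$ that the absolute-probability sequence of a column-stochastic chain satisfies, together with the same $nB$-window positivity estimate applied to the evolution of $\phi_k$ itself. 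Invoking the characterization in \cite{ned13} is acceptable here since that is precisely where the lemma comes from, but if you want the argument to stand on its own you should state and use that recursion directly.
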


Now the next lemma and proposition will show the desired properties required in lemma \ref{mc_bayes} to get the non-asymptotic results. First we will show the bounds on the expected value and then the bounded variation property.   

\begin{lemma}\label{lemma_expected}
	Consider $\varphi_{k+1}^i\left(\theta_v,\theta_w\right)$ as defined in Eq. \eqref{phi_def} then
\begin{align*}
\mathbb{E}\left[\varphi_{k+1}^i\left(\theta_v,\theta_w\right)\right] \leq \frac{1}{\delta}\gamma_1^i - k\gamma_2
\end{align*}
for all $i$ and $k\geq0$, where
\begin{align*}
\gamma_1^i & = \max_{\substack{\theta_w\in \hat{\Theta}^* \\\theta_v \notin \Theta^*} }
\left\{
\frac{2C}{1- \lambda} \|\boldsymbol{H}\left(\theta_v,\theta_w\right)\|_1  - \left[ \boldsymbol{H}\left(\theta_v,\theta_w\right)\right]_i   \right\},\cr
\gamma_2 &=  \frac{1}{n}\, \min_ {\theta_v\notin\Theta^*}
\left(\mathsf{C}^*-\mathsf{C}\left(\theta_v\right)\right),
\end{align*}
\end{lemma}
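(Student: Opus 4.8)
The plan is to take the expectation of the closed-form expression for $\varphi_{k+1}^i(\theta_v,\theta_w)$ derived in Eq.~\eqref{phi_iter}, split it into a dominant linear-drift term equal to $\frac{k}{n}\big(\mathsf C(\theta_v)-\mathsf C^*\big)$ plus a remainder that is uniformly bounded in $k$, and absorb the remainder into $\frac1\delta\gamma_1^i$. Since the graph sequence $\{\mathcal G_k\}$ is deterministic, so are $A_k$, $A_{k:t}$, the stochastic vectors $\phi_k$ of Lemma~\ref{lemma_angelia}, the matrices $D_{k:t}=A_{k:t}-\phi_k\mathbf 1'$, and $\boldsymbol y_{k+1}=A_{k:0}\mathbf 1$; the only randomness in \eqref{phi_iter} is in the log-likelihood-ratio vectors $\mathcal L_t^{\theta_v,\theta_w}$, so expectation passes linearly through every matrix coefficient. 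The first computation is $\mathbb E\big[[\mathcal L_t^{\theta_v,\theta_w}]_i\big]=\mathbb E_{f^i}[\log l^i(S^i|\theta_v)]-\mathbb E_{f^i}[\log l^i(S^i|\theta_w)]=-D_{KL}(f^i\|l^i(\cdot|\theta_v))+D_{KL}(f^i\|l^i(\cdot|\theta_w))=-[\boldsymbol H(\theta_v,\theta_w)]_i$, where the $\mathbb E_{f^i}[\log f^i]$ terms cancel and the value is the same for every $t$ because the $\{S_k^i\}$ are i.i.d. Summing coordinates gives $\mathbb E[\mathbf 1'\mathcal L_t^{\theta_v,\theta_w}]=\mathsf C(\theta_v)-\mathsf C(\theta_w)=\mathsf C(\theta_v)-\mathsf C^*\le-n\gamma_2$ (using $\theta_w\in\hat\Theta^*\subseteq\Theta^*$ and the definition of $\gamma_2$), and also $\mathsf C^*-\mathsf C(\theta_v)=\mathbf 1'\boldsymbol H(\theta_v,\theta_w)\le\|\boldsymbol H(\theta_v,\theta_w)\|_1$.

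Next I would isolate the drift. The consensus term in the numerator of \eqref{phi_iter} is $\phi_k^i\sum_{t=1}^k\mathbf 1'\mathcal L_t^{\theta_v,\theta_w}$, with expectation $k\phi_k^i(\mathsf C(\theta_v)-\mathsf C^*)$. Writing $\frac{\phi_k^i}{y_{k+1}^i}=\frac1n-\frac{y_{k+1}^i-n\phi_k^i}{n\,y_{k+1}^i}$, noting $y_{k+1}^i-n\phi_k^i=[D_{k:0}\mathbf 1]_i$ so that $|y_{k+1}^i-n\phi_k^i|\le nC\lambda^{k}$ by Lemma~\ref{lemma_angelia}, and using $y_{k+1}^i\ge\delta$ from Lemma~\ref{lemma_deltabound}, this term splits as $\frac kn(\mathsf C(\theta_v)-\mathsf C^*)$ — which is $\le-k\gamma_2$ by the definition of $\gamma_2$ — plus a correction of magnitude at most $\frac{k C\lambda^{k}}{\delta}\|\boldsymbol H(\theta_v,\theta_w)\|_1$, uniformly bounded since $k\lambda^{k}\le\frac{1}{e\ln(1/\lambda)}\le\frac{1}{1-\lambda}$.

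For the remaining pieces of the numerator, $\big[\mathcal L_{k+1}^{\theta_v,\theta_w}\big]_i$ contributes $-[\boldsymbol H(\theta_v,\theta_w)]_i$ in expectation, and $\sum_{t=1}^k\big[D_{k:t}\mathcal L_t^{\theta_v,\theta_w}\big]_i$ contributes $-\sum_{t=1}^k[D_{k:t}\boldsymbol H(\theta_v,\theta_w)]_i$, whose absolute value is at most $\|\boldsymbol H(\theta_v,\theta_w)\|_1\sum_{t=1}^kC\lambda^{k-t}\le\frac{C}{1-\lambda}\|\boldsymbol H(\theta_v,\theta_w)\|_1$ since $\big|[D_{k:t}]_{ij}\big|\le C\lambda^{k-t}$ by Lemma~\ref{lemma_angelia}. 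Dividing the full numerator by $y_{k+1}^i\ge\delta$ and collecting all the $(\theta_v,\theta_w)$-dependent bounds shows that everything except the drift is at most $\frac1\delta\Big(\frac{2C}{1-\lambda}\|\boldsymbol H(\theta_v,\theta_w)\|_1-[\boldsymbol H(\theta_v,\theta_w)]_i\Big)\le\frac1\delta\gamma_1^i$, because $\gamma_1^i$ is the maximum of exactly this expression over $\theta_w\in\hat\Theta^*,\ \theta_v\notin\Theta^*$. Adding the drift bound $\frac kn(\mathsf C(\theta_v)-\mathsf C^*)\le-k\gamma_2$ gives the claim for all $i$ and $k\ge0$.

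I expect the main obstacle to be handling the denominator $y_{k+1}^i$: one must extract a clean $-k\gamma_2$ drift even though $y_{k+1}^i\neq n\phi_k^i$ during the transient, and one must divide every term by $y_{k+1}^i$. The two ingredients that make this work are Lemma~\ref{lemma_angelia} (geometric closeness of $A_{k:t}$ to the rank-one limit $\phi_k\mathbf 1'$, which simultaneously bounds the perturbation terms and controls $y_{k+1}^i-n\phi_k^i$) and Lemma~\ref{lemma_deltabound} (the uniform lower bounds $y_{k+1}^i\ge\delta$ and $\phi_k^i\ge\delta/n$). The factor $2C$ rather than $C$ in $\gamma_1^i$ is exactly the slack needed to absorb both the geometric tail $\frac{C}{1-\lambda}\|\boldsymbol H\|_1$ from $\sum_t D_{k:t}\boldsymbol H$ and the transient correction $\frac{kC\lambda^k}{\delta}\|\boldsymbol H\|_1$ from the denominator mismatch, together with the sign bookkeeping of the $-[\boldsymbol H]_i$ term; checking that it suffices is a routine constant verification that uses $1-\lambda<1$.
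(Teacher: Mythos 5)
Your proposal is correct and follows essentially the same route as the paper's proof: take the expectation of the explicit expression in Eq.~\eqref{phi_iter}, add and subtract the uniform drift $\tfrac{k}{n}\mathbf{1}'\boldsymbol{H}(\theta_v,\theta_w)$, bound the perturbation terms $D_{k:t}$ and the denominator mismatch via Lemma~\ref{lemma_angelia}, lower-bound the denominator by $\delta$ via Lemma~\ref{lemma_deltabound}, and absorb everything but the drift into $\tfrac{1}{\delta}\gamma_1^i$. The only difference is presentational: you track the sign $\mathbb{E}[\mathcal{L}_t^{\theta_v,\theta_w}]=-\boldsymbol{H}(\theta_v,\theta_w)$ explicitly (the paper's write-up is sloppy on this sign but lands on the same stated bound), and your splitting of $\phi_k^i/y_{k+1}^i$ is algebraically equivalent to the paper's add-and-subtract step.
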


\begin{proof}
	First by taking the expected value of Eq. \eqref{phi_iter} we have that for all $k \geq 0$,
	{\small
	\begin{align*}
		& \mathbb{E}\left[\varphi_{k+1}^i\left(\theta_v,\theta_w\right)\right] \\
		& = \frac{ \sum\limits_{t=1}^{k} \left[ D_{k:t}  \boldsymbol{H}\left(\theta_v,\theta_w\right) \right]_i +  \left[  \boldsymbol{H}\left(\theta_v,\theta_w\right)\right]_i +\sum\limits_{t=1}^{k} \phi_k^i \mathbf{1}'  \boldsymbol{H}\left(\theta_v,\theta_w\right)}{\left[ D_{k:0}\mathbf{1}\right] _i+ \phi_k^i n} \\
		& = \frac{ \sum\limits_{t=1}^{k} \left[ D_{k:t}  \boldsymbol{H}\left(\theta_v,\theta_w\right) \right]_i +  \left[  \boldsymbol{H}\left(\theta_v,\theta_w\right)\right]_i + k \phi_k^i \mathbf{1}'  \boldsymbol{H}\left(\theta_v,\theta_w\right)}{\left[ D_{k:0}\mathbf{1}\right] _i+ \phi_k^i n} 
	\end{align*}
}
The main idea is to analyze how the term $\mathbb{E}\left[\varphi_{k+1}^i\left(\theta_v,\theta_w\right)\right]$ differs from a dynamic term where all agents have the same importance in the network and thus the learning occurs at a rate $\frac{\mathbf{1}'\boldsymbol{H}\left(\theta_v,\theta_w\right)}{n} $.

As a first step lets add and subtract the term $k\frac{\mathbf{1}'\boldsymbol{H}\left(\theta_v,\theta_w\right)}{n} $, therefore we obtain
\begin{align*}
& \mathbb{E}\left[\varphi_{k+1}^i\left(\theta_v,\theta_w\right)\right] \\
 & = \frac{ \sum\limits_{t=1}^{k} \left[ D_{k:t}  \boldsymbol{H}\left(\theta_v,\theta_w\right) \right]_i +  \left[  \boldsymbol{H}\left(\theta_v,\theta_w\right)\right]_i +k \phi_k^i \mathbf{1}'  \boldsymbol{H}\left(\theta_v,\theta_w\right)}{\left[ D_{k:0}\mathbf{1}\right] _i+ \phi_k^i n} \\
& \ \ \ \ \ \ \ 
- k\frac{\mathbf{1}'\boldsymbol{H}\left(\theta_v,\theta_w\right)}{n}
+ k\frac{\mathbf{1}'\boldsymbol{H}\left(\theta_v,\theta_w\right)}{n}
\end{align*}
by working out the arithmetics we have
{\small
\begin{align*}
& \mathbb{E}\left[\varphi_{k+1}^i\left(\theta_v,\theta_w\right)\right] \\
& = \frac{ n \left( \sum\limits_{t=1}^{k} \left[  D_{k:t} \boldsymbol{H}\left(\theta_v,\theta_w\right) \right]_i + \left[  \boldsymbol{H}\left(\theta_v,\theta_w\right)\right]_i + k \phi_k^i  \mathbf{1}'\boldsymbol{H}\left(\theta_v,\theta_w\right) \right)  }{n\left(  \left[ D_{k:0}\mathbf{1}\right] _i+ \phi_k^i n\right) }  
 \\ 
& \qquad -\frac{\left( \left[ D_{k:0}\mathbf{1}\right] _i+ \phi_k^i n\right) k\mathbf{1}'\boldsymbol{H}\left(\theta_v,\theta_w\right)}{n\left(  \left[ D_{k:0}\mathbf{1}\right] _i+ \phi_k^i n\right) } +k\frac{\mathbf{1}'\boldsymbol{H}\left(\theta_v,\theta_w\right)}{n} \\
& = \frac{ n \left( \sum\limits_{t=1}^{k} \left[  D_{k:t} \boldsymbol{H}\left(\theta_v,\theta_w\right) \right]_i + \left[  \boldsymbol{H}\left(\theta_v,\theta_w\right)\right]_i \right)  }{n\left(  \left[ D_{k:0}\mathbf{1}\right] _i+ \phi_k^i n\right) } \\
& \qquad  -\frac{ \left( \left[ D_{k:0}\mathbf{1}\right] _i\right) k\mathbf{1}'\boldsymbol{H}\left(\theta_v,\theta_w\right)}{n\left(  \left[ D_{k:0}\mathbf{1}\right] _i+ \phi_k^i n\right) } +k\frac{\mathbf{1}'\boldsymbol{H}\left(\theta_v,\theta_w\right)}{n} 
\end{align*}
}
Before finalizing the proof note that the denominator of the above function has the property ${\left[ D_{k:0}\mathbf{1}\right]_i+ \phi_k^i n > \delta}$. As noted in \cite{ned13}, this follows from the fact that this term is the sum of the $i$-th row of the matrix $A_{k:0}$ multiplied $n$ times. Therefore by taking absolute value of the first terms we obtain,
{\small
\begin{align*}
& \mathbb{E}\left[\varphi_{k+1}^i\left(\theta_v,\theta_w\right)\right] \\
& \leq \frac{1}{\delta}\left( \sum\limits_{t=1}^{k}\left(\max_j |\left[  D_{k:t} \right]_{ij} | \right)  \|\boldsymbol{H}\left(\theta_v,\theta_w\right)\|_1 
+ \left[  \boldsymbol{H}\left(\theta_v,\theta_w\right)\right]_i  \right) 
\\
& \qquad + \frac{k}{n \delta}  \|\boldsymbol{H}\left(\theta_v,\theta_w\right)\|_1\left(\max_j |\left[  D_{k:0} \right]_{ij} | \right)n  + k\frac{\mathbf{1}'\boldsymbol{H}\left(\theta_v,\theta_w\right)}{n} 
\end{align*}
}
Using Lemma \ref{lemma_angelia} we obtain upper bounds on $|\left[  D_{k:t} \right]_{ij}|$ where 
{\small
\begin{align*}
& \mathbb{E}\left[\varphi_{k+1}^i\left(\theta_v,\theta_w\right)\right] \\
& \leq \frac{1}{\delta}\left( C \sum\limits_{t=1}^{k} \lambda^{k-t}  \|\boldsymbol{H}\left(\theta_v,\theta_w\right)\|_1 
+ \left[  \boldsymbol{H}\left(\theta_v,\theta_w\right)\right]_i  \right) \\
& \qquad
+ k\frac{C}{\delta} \lambda^t  \|\boldsymbol{H}\left(\theta_v,\theta_w\right)\|_1+ k\frac{\mathbf{1}'\boldsymbol{H}\left(\theta_v,\theta_w\right)}{n} \\
& \leq \frac{2C}{\delta}\frac{1}{1- \lambda} \|\boldsymbol{H}\left(\theta_v,\theta_w\right)\|_1  +  \frac{1}{\delta}\left[  \boldsymbol{H}\left(\theta_v,\theta_w\right)\right]_i   
+ k\frac{\mathbf{1}'\boldsymbol{H}\left(\theta_v,\theta_w\right)}{n} 
\end{align*}
}
Finally note that we can express the term $\mathbf{1}'\boldsymbol{H}\left(\theta_v,\theta_w\right)$ in term of the group confidence as ${\mathbf{1}'\boldsymbol{H}\left(\theta_v,\theta_w\right) = \mathsf{C}^* -\mathsf{C}\left(\theta_v\right)}$. 
The desired bound will follow by defining,
\begin{align*}
\gamma_1^i & =  \max_{\substack{\theta_w\in \hat{\Theta}^* \\\theta_v \notin \Theta^*} }
\left\{ 
\frac{2C}{1- \lambda} \|\boldsymbol{H}\left(\theta_v,\theta_w\right)\|_1  -  \left[ \boldsymbol{H}\left(\theta_v,\theta_w\right)\right]_i  \right\},\cr
\gamma_2 &=  \frac{1}{n}\, \min_ {\theta_v\notin\Theta^*}
\left(\mathsf{C}^*-\mathsf{C}\left(\theta_v\right)\right).
\end{align*}
\end{proof}

Next we will show that the term $\varphi_{k}^i\left(\theta_v,\theta_w\right)$, as a function of a sequence of $t$ random vectors, is of bounded variations.  

\begin{proposition}\label{prop_bounded}
	The term  $\varphi_{k}^i\left(\theta_v,\theta_w\right)$ is a function of a sequence of $t$ random vectors and it is of bounded variations.
\end{proposition}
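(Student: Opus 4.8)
First, I would put $\varphi_{k+1}^i(\theta_v,\theta_w)$ into a form that makes its dependence on the observations explicit. Unrolling the recursion \eqref{vector_phi} under the uniform-prior convention $\boldsymbol{\hat{\varphi}}_0=\mathbf{0}$ (already adopted above), dividing by $y_{k+1}^i$, and abbreviating $\mathcal{L}_t\triangleq\mathcal{L}_t^{\theta_v,\theta_w}$, one gets
{\small
\begin{equation*}
\varphi_{k+1}^i(\theta_v,\theta_w)=\frac{1}{y_{k+1}^i}\left(\sum_{t=1}^{k}[A_{k:t}\mathcal{L}_t]_i+[\mathcal{L}_{k+1}]_i\right),
\end{equation*}
}
with $y_{k+1}^i=[A_{k:0}\mathbf{1}]_i$. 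The matrices $A_{k:t}$ and the scalar $y_{k+1}^i$ are deterministic, depending only on the graph sequence; the only randomness enters through the vectors $\mathcal{L}_t$, and $[\mathcal{L}_t]_j=\log\frac{l^j(S_t^j|\theta_v)}{l^j(S_t^j|\theta_w)}$ is a deterministic function of the single random vector $\boldsymbol{S}_t$. Hence $\varphi_{k+1}^i(\theta_v,\theta_w)=g(\boldsymbol{S}_1,\dots,\boldsymbol{S}_{k+1})$ for a deterministic $g$, which establishes the first assertion; it remains to produce bounded-difference constants $\{c_t^i\}$ for $g$.

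Second, I would bound the coordinates of $\mathcal{L}_\tau$ and then estimate how far $g$ can move when its $\tau$-th argument is resampled. Since $S_\tau^j$ lies almost surely in the support of $f^j$, assumption (b) applies and gives $\alpha<l^j(S_\tau^j|\theta)\le 1$ for every $\theta$ (the upper bound because each $l^j(\cdot|\theta)$ is a probability distribution), so $[\mathcal{L}_\tau]_j=\log l^j(S_\tau^j|\theta_v)-\log l^j(S_\tau^j|\theta_w)$ lies in $(\log\alpha,-\log\alpha)$ and $|[\mathcal{L}_\tau]_j|\le|\log\alpha|$; thus replacing $\boldsymbol{s}_\tau$ by any admissible $\boldsymbol{s}_\tau'$ perturbs every coordinate of $\mathcal{L}_\tau$ by at most $2|\log\alpha|$. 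Fixing $\tau\in\{1,\dots,k+1\}$ and changing only the $\tau$-th argument leaves $\mathcal{L}_t$ unchanged for $t\ne\tau$, so — using nonnegativity of $A_{k:\tau}$ and the convention $A_{k:k+1}\triangleq I$ — the value of $\varphi_{k+1}^i$ changes by at most $\frac{1}{y_{k+1}^i}|[A_{k:\tau}(\mathcal{L}_\tau-\mathcal{L}_\tau')]_i|\le\frac{2|\log\alpha|}{y_{k+1}^i}[A_{k:\tau}\mathbf{1}]_i$. To turn this into a constant free of the realization and of $k$, I would use the semigroup identity $A_{k:0}=A_{k:\tau}A_{\tau-1:0}$ together with $\boldsymbol{y}_\tau=A_{\tau-1:0}\mathbf{1}$ to write $y_{k+1}^i=[A_{k:\tau}\boldsymbol{y}_\tau]_i=\sum_j[A_{k:\tau}]_{ij}y_\tau^j\ge\delta\sum_j[A_{k:\tau}]_{ij}=\delta[A_{k:\tau}\mathbf{1}]_i$, where $y_\tau^j=[A_{\tau-1:0}\mathbf{1}]_j\ge\delta$ by Lemma \ref{lemma_deltabound}. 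Hence $[A_{k:\tau}\mathbf{1}]_i\le y_{k+1}^i/\delta$, and the $\tau$-th bounded-difference constant may be taken uniformly as $c_\tau^i=\frac{2|\log\alpha|}{\delta}$. This is precisely the ingredient Lemma \ref{mc_bayes} requires, and since $\sum_{t=1}^{k+1}(c_t^i)^2=(k+1)\left(\frac{2|\log\alpha|}{\delta}\right)^2$, feeding it into Lemma \ref{mc_bayes} reproduces the threshold $\boldsymbol{N}(\rho)$ of Theorem \ref{thm_1}.

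The step I expect to be the main obstacle is keeping the constants network-independent. The obvious estimate $[A_{k:\tau}\mathbf{1}]_i\le n$ (every entry of a product of column-stochastic nonnegative matrices is at most $1$) would inflate each $c_\tau^i$ by a factor $n$ and make $\boldsymbol{N}(\rho)$ grow with the network size, defeating the purpose of the theorem; the identity $A_{k:0}=A_{k:\tau}A_{\tau-1:0}$, which lets the denominator $y_{k+1}^i$ absorb the row sum via the uniform lower bound $\delta$ on $\boldsymbol{y}$, is exactly what circumvents this. The remaining points are routine: the difference bound need only be checked for resampled values $\boldsymbol{s}_\tau'$ in the support of $\boldsymbol{S}_\tau$ (where assumption (b) is in force), which is all McDiarmid's inequality in Lemma \ref{mc_bayes} uses, and the discarded term $A_{k:0}\boldsymbol{\hat{\varphi}}_0$ is deterministic (indeed zero under uniform priors), hence irrelevant to bounded variations.
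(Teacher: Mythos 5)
Your proposal is correct and follows essentially the same route as the paper: both arguments isolate the single term through which $\boldsymbol{s}_\tau$ enters $\varphi_{k+1}^i$, bound the coordinates of $\mathcal{L}_\tau$ by $\log\frac{1}{\alpha}$, and use the semigroup identity $y_{k+1}^i=\sum_j[A_{k:\tau}]_{ij}y_\tau^j$ together with $y_\tau^j\ge\delta$ (the paper phrases this as the row-stochastic reweighting $B_{k:\tau}$) to obtain the network-independent constant $c_\tau^i=\frac{2}{\delta}\log\frac{1}{\alpha}$. Your treatment is, if anything, slightly more careful in handling the $\tau=k+1$ term via $A_{k:k+1}=I$ and in bounding the difference directly rather than via separate max/min bounds.
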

\begin{proof}
	Following the definition of a bounded variation function we have,
	\begin{align*}
	\max_{\textbf{s}_t \in \mathcal{S}} \varphi_{k+1}^i\left(\theta_v,\theta_w\right) 	& =  \max_{\textbf{s}_t \in \mathcal{S}} 
	\frac{\sum_{j=1 }^{n}\left[  A_{k:t}\right] _{ij} \left[ \mathcal{L}^{\theta_v,\theta_w}_t \right]_j}{y_{k+1}^i}   \\
	& =  \max_{\textbf{s}_t \in \mathcal{S}} 
	\frac{\sum_{j=1 }^{n}\left[  A_{k:t}\right] _{ij} \frac{y_t^j}{y_t^j} \left[ \mathcal{L}^{\theta_v,\theta_w}_t \right]_j}{y_{k+1}^i}  
	\end{align*}
	Now we can define a new set of weights $\left[B_{k:t}\right]_{ij} = \frac{\left[A_{k:t}\right]_{ij}y_t^j}{y_{k+1}^i} = \frac{\left[A_{k:t}\right]_{ij}y_t^j}{\sum\limits_{i=1}^{n}\left[A_{k:t}\right]_{ij}y_t^j}$ that multiply the vector whose entries are $\frac{\left[ \mathcal{L}^{\theta_v,\theta_w}_t \right]_j}{y_t^j}$. This new set of weights add up to one, i.e. $\sum\limits_{j=1}^{n}\left[B_{k:t}\right]_{ij} =1$ thus 
	\begin{align*}
	\max_{\textbf{s}_t \in \mathcal{S}} \varphi_{k+1}^i\left(\theta_v,\theta_w\right) &  =  \max_{\textbf{s}_t \in \mathcal{S}}  \sum_{j=1}^{n}\left[  B_{k:t}\right] _{ij} \frac{\left[ \mathcal{L}^{\theta_v,\theta_w}_t \right]_j}{y_t^j}   \\
	& \leq \frac{1}{\delta}\left( \log \frac{1 }{\alpha} \right) 
	\end{align*}
	Finally, by symmetry we also have that 
	\begin{align*}
	-\min_{\textbf{s}_t \in \mathcal{S}} \varphi_{k+1}^i\left(\theta_v,\theta_w\right) & \leq \frac{1}{\delta}\left( \log \frac{1 }{\alpha} \right)
	\end{align*}
	Therefore,
	\begin{align*}
		\max_{\textbf{s}_t \in \mathcal{S}} \varphi_{k+1}^i\left(\theta_v,\theta_w\right)-\min_{\textbf{s}_t \in \mathcal{S}} \varphi_{k+1}^i\left(\theta_v,\theta_w\right) & \leq \frac{2}{\delta}\left( \log \frac{1 }{\alpha} \right) 
	\end{align*}
	This completes the proof.
\end{proof}

At this point we are ready to proof the main result.

\begin{proof}
	[Theorem \ref{thm_1}]
	The proof procedure will be a compilation of previous Lemmas and propositions. As a first step we will show that the proposed learning rule can be expressed as a log-linear function.
	
	Since $\mu_k^i\left(\theta\right) \in (0,1]$ for all $i =1, \hdots,n$, $k \geq 0$ and all $\theta \in \Theta$, we have that,
	{\small
	\begin{align*}
	& \mu_{k+1}^i\left(\theta_v\right)  \leq \frac{\mu_{k+1}^i\left(\theta_v\right)}{\mu_{k+1}^i\left(\theta_w\right)} \\
	& = \frac{\left( \prod\limits_{i=1}^{n}\mu_{k}^j\left(\theta_v\right)^{\left[A_k\right]_{ij}{y_k^j}}l^i\left(s_{k+1}^i|\theta_v\right)\right)^{\frac{1}{y_{k+1}^i}}}{\left( \prod\limits_{i=1}^{n}\mu_{k}^j\left(\theta_w\right)^{\left[A_k\right]_{ij}{y_k^j}}l^i\left(s_{k+1}^i|\theta_w\right)\right)^{\frac{1}{y_{k+1}^i}}} \\
	& = \exp \left(\frac{1}{y_{k+1}^i} \left( \sum\limits_{i=1}^{n} \left[A_k\right]_{ij}{y_k^j} \log \frac{\mu_{k}^j\left(\theta_v\right)}{\mu_{k}^j\left(\theta_w\right)} + \log \frac{l^i\left(s_{k+1}^i|\theta_v\right)}{l^i\left(s_{k+1}^i|\theta_w\right)} \right) \right) \\
	& = \exp \left(\frac{1}{y_{k+1}^i} \left( \sum\limits_{i=1}^{n} \left[A_k\right]_{ij} \hat{\varphi}_k^j\left(\theta_v,\theta_w\right) + \log \frac{l^i\left(s_{k+1}^i|\theta_v\right)}{l^i\left(s_{k+1}^i|\theta_w\right)} \right) \right) \\
	& = \exp \left(\varphi_{k+1}^j\left(\theta_v,\theta_w\right) \right)
	\end{align*} 
	}
	This result along side lemma \ref{lemma_expected} and Proposition \ref{prop_bounded} provides the conditions for Lemma \ref{mc_bayes}, thus the following relation is valid,
	{\small
	\begin{align*}
	\mathbb{P}\left(\mu_{k+1}^i\left(\theta_v \right) 
	\geq \exp\left( -\frac{k}{2}\gamma_2 + \gamma_1^i \right) \right)  & \leq   \exp\left( -\frac{\frac{1}{2} \left( k\gamma_2 \right)^2}{\sum_{t=1}^{k+1}\left(  c_t^i\right)  ^2} \right)
	\end{align*}
	}
	specifically, we have that $c_t^i = \frac{2}{\delta} \log \frac{1}{\alpha}$. Therefore
\begin{align*}
& \mathbb{P}\left(\varphi_{k+1}^i\left(\theta_v,\theta_w\right) - \mathbb{E}\left[\varphi_{k+1}^i\left(\theta_v,\theta_w\right) \right]  
\geq   \frac{k}{2}\gamma_2  \right) \\
& \qquad \leq   \exp\left( -\frac{\frac{1}{2} \left( k\gamma_2 \right)^2}{\sum_{t=1}^{k+1} \left( \frac{2}{\delta} \log \frac{1}{\alpha}\right) ^2} \right)  \\
& \qquad =  \exp\left( -\frac{\left( k\gamma_2 \delta^2 \right)^2}{ 8 (k+1) \left( \log \frac{1}{\alpha}\right)^2}  \right) \\
& \qquad \leq   \exp\left( -\frac{ (k-1)\gamma_2 ^2\delta^2 }{ 8  \left(  \log \alpha\right)^2} \right). 
\end{align*}
Finally, for a given confidence level $\rho$, in order to have 
$\mathbb{P}\left(\mu_{k}^i\left(\theta_v \right) \geq \exp\left( -\frac{1}{2}k\gamma_2 + \gamma_1^i \right) \right) 
\leq \rho$ we require that
\begin{align*}
k & \geq \frac{8 \left( \log\left(\alpha \right)\right) ^2  \log\frac{1}{\rho} }{\delta^2 \gamma_2^2} +1.
\end{align*}
This completes the proof.
\end{proof}

\section{Conclusions and Future Work}\label{conclusions}

We proposed a new update rule for the problem of distributed non-Bayesian learning on time-varying directed graphs with conflicting hypothesis. We show that the beliefs of all agents concentrate around a optimal set of hypothesis explicitly characterized as the solution to an optimization problem. This optimization problem consists on finding a probability distribution (from a parametrized family of distributions) closest to the unknown distribution of the observations and it needs to be solved by the agents interacting over a sequence of network and using local information only. The proposed algorithm also guarantees that after a finite transient time, that depends on the network structure, all agents will learn at a network independent rate that is the average of the agents individual learning abilities. We refer this as a ``balanced" behavior since all agents are weighted equally even if its connectivity is different. This results guarantees certain robustness properties of the learning process since faulty sensors or adversarial agents will not have any vantage even if they are centrally located.

Further research is required to study the case of a continuum set of hypothesis and the efficient transmission of probability distributions. Connections of the proposed algorithm and distributed balancing of matrices need to be explored as well. Furthermore, the characterization of the distributed non-Bayesian update rules as the solution of well defined optimization procedure generates a plethora of approaches to efficiently solve the estimation problem in many scenarios. 

\bibliographystyle{IEEEtran} 

\bibliography{IEEEfull,bayes_cons_2}

\end{document}